\newtheorem{theorem}{Theorem}[section]
\newtheorem{lemma}[theorem]{Lemma}
\newtheorem{corollary}[theorem]{Corollary}
\newtheorem*{theorem*}{Theorem}
\newtheorem*{acknowledgement*}{Acknowledgement}
\theoremstyle{definition}
\newtheorem*{claim*}{Claim}
\theoremstyle{remark}
\newcommand{\Rc}[0]{\operatorname{Rc}}
\newcommand{\Rm}[0]{R}
\newcommand{\defn}[0]{\doteqdot}
\newcommand{\pdt}[0]{\frac{\partial}{\partial t}}
\newcommand{\heatop}{\left(\pdt - \Delta\right)}
\newcommand{\supp}[0]{\operatorname{supp}}
\newcommand{\gt}[0]{\tilde{g}}
\numberwithin{equation}{section}
\begin{document}

\title[A local version of Bando's theorem]{A local version of Bando's theorem on the real-analyticity of solutions
to the Ricci flow}

\author{Brett Kotschwar}
\address{Max Planck Institut f\"ur Gravitationsphysik, Golm, Germany}
\address{Arizona State University, Tempe, Arizona, USA}
\email{kotschwar@math.asu.edu}

\thanks{The author was supported in part by NSF grant DMS-0805834}

\date{June 2011}

\keywords{}
\begin{abstract}
  It is a theorem of S. Bando that if $g(t)$ is a solution to the Ricci flow on a compact manifold $M$,
  then $(M, g(t))$ is real-analytic for each $t >0$. In this note, we extend his result to smooth solutions on open domains 
  $U\subset M$.
\end{abstract}
\maketitle

\section{Introduction}

Suppose $U$ is an open subset of $M= M^n$. We consider a smooth solution $g(t)$ to the Ricci flow 
\begin{equation}\label{eq:rf}
  \pdt g = -2\Rc(g)
\end{equation}
on $U\times[0, T]$.  The purpose of this note is to establish the following result.
\begin{theorem}\label{thm:analyticity}
For $0 < t \leq T$, $(U, g(t))$ is a real-analytic manifold.
\end{theorem}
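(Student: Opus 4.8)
The plan is to localize the statement, recast the Ricci flow near a given spacetime point as a strictly parabolic quasilinear system whose nonlinearity is real-analytic in the unknown metric and its first spatial derivatives, prove interior spatial analyticity for such a system, and transport the conclusion back to $g(t)$. Fix $p_{0}\in U$ and $t_{0}\in(0,T]$; it suffices to produce a smooth chart about $p_{0}$ in which $g(t_{0})$ is real-analytic. Choose a smooth chart $x=(x^{1},\dots,x^{n})$ identifying a neighborhood of $p_{0}$ with $\Omega\subset\R{n}$, pick $t_{1}<t_{0}$ with $t_{0}-t_{1}$ small, and note that on a parabolic cylinder $Q\subset\subset\Omega\times(0,T]$ about $(p_{0},t_{0})$ the quantities $g$, $g^{-1}$, and all spatial derivatives of $g$ are bounded. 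On $Q$ run the Ricci--DeTurck flow with initial data $g(\cdot,t_{1})$ and background the flat connection of the chart: this yields, after shrinking $\Omega$, a smooth $\gt(t)$, $t\in[t_{1},t_{0}]$, with
\begin{equation*}
  \pdt\gt_{ij}=\gt^{ab}\partial_{a}\partial_{b}\gt_{ij}+\mathcal{B}_{ij}(\gt,\partial\gt),
\end{equation*}
where $\mathcal{B}_{ij}$ is polynomial in $\partial\gt$ with coefficients rational --- hence real-analytic --- in the entries of $\gt$ on $\{\gt>0\}$, and with $g(t)=\varphi_{t}^{\ast}\gt(t)$, $\varphi_{t}$ being the flow ($\varphi_{t_{1}}=\operatorname{id}$) of $-W$, $W^{k}=\gt^{ab}\Gamma^{k}_{ab}(\gt)$; here $t_{0}-t_{1}$ is taken small enough (uniformly, using the bounds on $Q$) that $\gt$ and the $\varphi_{t}$ exist on $[t_{1},t_{0}]$ over a fixed ball around $p_{0}$. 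This reduces matters to the real-analyticity of $\gt(\cdot,t_{0})$ near $p_{0}$: since $g(t_{0})=\varphi_{t_{0}}^{\ast}\gt(t_{0})$, in the smooth chart $\varphi_{t_{0}}\circ x$ the components of $g(t_{0})$ coincide with the components of $\gt(t_{0})$ in the chart $x$.

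\emph{Spatial analyticity of the parabolic system.} The core step is to show that a smooth solution $u$ on a parabolic cylinder of a strictly parabolic quasilinear system $\pdt u=a^{ab}(u)\partial_{a}\partial_{b}u+b(u,\partial u)$ with $a,b$ real-analytic is, for each fixed $t$, real-analytic in the space variables on the interior. I would follow the classical route for nonlinear parabolic equations: prove, on a fixed smaller ball $B'\ni p_{0}$, an estimate
\begin{equation*}
  \sup_{B'\times\{t_{0}\}}|\partial_{x}^{\alpha}u|\le M^{|\alpha|+1}\,|\alpha|!\qquad\text{for all multi-indices }\alpha,
\end{equation*}
by induction on $|\alpha|$: differentiate the system $\alpha$ times, so that $\partial_{x}^{\alpha}u$ solves a linear parabolic equation with frozen leading operator $a^{ab}(u)\partial_{a}\partial_{b}$ and forcing built from commutators and from derivatives of $b$; apply interior parabolic Schauder estimates on cylinders shrinking with $|\alpha|$; and insert the inductive bounds on lower-order derivatives. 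The decisive point is that, because $a$ and $b$ are \emph{real-analytic}, the power-series coefficients appearing in the Leibniz and Fa\`a di Bruno expansions decay geometrically, so the sums of multinomial coefficients controlling the forcing at each stage grow only geometrically in $|\alpha|$ --- which is precisely what allows the induction to close with a single factorial rather than with the $(|\alpha|!)^{2}$, or worse, produced by a naive Bernstein--Bando--Shi iteration. (Alternatively, freeze $\gt$ at $(p_{0},t_{1})$, write $\gt=\bar g+h$, $\bar\Delta\defn\bar g^{ab}\partial_{a}\partial_{b}$, and run a Duhamel fixed-point argument for $h$ in a scale of Banach spaces of functions holomorphic and bounded on complex strips $\{\,|\operatorname{Im}z|<\rho\,\}$, using the analytic-smoothing bound $\|e^{\tau\bar\Delta}f\|_{\rho+\kappa\sqrt{\tau}}\lesssim\|f\|_{\rho}$ and the gain of one spatial derivative at cost $\tau^{-1/2}$; real-analyticity of the nonlinearity is what makes the associated composition operator bounded on these spaces.) Either way $\gt(\cdot,t_{0})$ is real-analytic near $p_{0}$, hence so is $g(t_{0})$, in the chart above.

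\emph{Conclusion and main difficulty.} Since $p_{0}$ and $t_{0}$ were arbitrary, $g(t)$ is, for each $t\in(0,T]$, real-analytic in some smooth chart about every point of $U$; by the regularity theory for harmonic coordinates this upgrades to a real-analytic atlas on $U$ in which $g(t)$ is real-analytic, so $(U,g(t))$ is a real-analytic manifold. The main obstacle is the inductive estimate of the second step: the passage from \emph{all spatial derivatives bounded} --- which is routine interior parabolic regularity, or the local Shi estimates --- to \emph{bounds with genuine factorial growth}. Forcing the combinatorics of repeatedly differentiating the flow's nonlinearity to close with a factorial (rather than merely a Gevrey-type bound) is the entire analytic content of Bando's theorem, and it is there that the real-analytic --- not merely smooth --- structure of the Ricci--DeTurck right-hand side is essential. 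A secondary, routine point is the uniform short-time control needed to carry out the DeTurck reduction on a fixed neighborhood of $(p_{0},t_{0})$.
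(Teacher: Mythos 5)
Your reduction to the Ricci--DeTurck flow runs in the wrong direction, and as stated the identity $g(t)=\varphi_t^{*}\gt(t)$ is a genuine gap. If you solve a local initial(--boundary) value problem for the Ricci--DeTurck system on a small cylinder with data $g(\cdot,t_1)$, then pulling the resulting $\gt$ back by the flow of $-W$ produces \emph{some} local Ricci flow starting at $g(\cdot,t_1)$; to conclude that this pullback coincides with the given solution $g(t)$ you need forward uniqueness of the Ricci flow on an open domain with no boundary control. That is not available: uniqueness for a parabolic problem on a bounded domain requires boundary conditions, and for the only weakly parabolic, diffeomorphism-invariant Ricci flow such local uniqueness statements are precisely the kind of unique-continuation results that this paper obtains as \emph{corollaries} of analyticity, so they cannot be taken as input. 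The repair is standard but must be made explicit: go the other way. Solve the harmonic map heat flow into the flat chart, $\pdt\phi=\Delta_{g(t),\delta}\phi$ (for a flat target this is a linear parabolic system in the components of $\phi$), with $\phi_{t_1}=\operatorname{id}$ and, say, identity boundary values on a small ball; for $t_0-t_1$ small the uniform bounds on $g$ and its derivatives keep $\phi_t$ a diffeomorphism near $p_0$ up to time $t_0$, and then $\gt(t)\defn(\phi_t)_{*}g(t)$ satisfies the strictly parabolic Ricci--DeTurck equation by a pointwise computation, with no uniqueness invoked. With that change, transporting analyticity of $\gt(\cdot,t_0)$ back to $g(t_0)$ through the smooth chart built from $\phi_{t_0}$ is fine.

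The remaining substantive step --- interior spatial real-analyticity of smooth solutions of quasilinear strictly parabolic systems with real-analytic nonlinearities --- you only sketch; making the Leibniz/Fa\`a di Bruno bookkeeping close with geometric (rather than factorial) growth of the combinatorial factors is exactly the hard content, so you must either carry out the induction (or the complex-strip Duhamel scheme) in detail or cite the classical theorems of Friedman/Eidelman on analyticity for parabolic systems. Granting that, your argument is genuinely different from the paper's: the paper fixes no gauge at all, but proves the factorial estimates $t^{m/2}|\nabla^m R|\le CN^{m/2}(m+1)!$ directly from the evolution of curvature under Ricci flow, using a cutoff-localized version of Bando's quantity $\sum_{k\le m}(t/N)^k\eta^{k+1}|\nabla^k R|^2/((k+1)!)^2$ together with the maximum principle, and then reads off analyticity in geodesic normal coordinates. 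Your route outsources the combinatorics to classical parabolic theory and avoids curvature evolution equations entirely, at the price of having to set up the DeTurck gauge correctly as above; the paper's route is self-contained and stays within the Shi/Hamilton local-estimate framework, which is what makes the localization essentially painless there.
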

More precisely, about each point $p\in U$ there is a neighborhood on which the expression of $g(t)$ in geodesic normal coordinates
is real-analytic.  This is a result of S. Bando \cite{Bando} when $M$ is compact, and his argument extends, essentially without change,
to the case when $(M, g(t))$ is complete and of uniformly bounded curvature. Our aim is to eliminate the global assumptions on the metric,
and verify that, as is typical of parabolic equations, instantaneous analyticity in the spatial variables is a purely local phenomenon.

From Theorem \ref{thm:analyticity} and a classical monodromy-type argument (cf. Corollary 6.4 in \cite{KobayashiNomizu}), it is then automatic
to obtain the following qualitative unique-continuation results for complete solutions (of possibly unbounded curvature).
\begin{corollary}
 Suppose that $M$ is connected and simply-connected, and 
$g(t)$, $\gt(t)$ are complete solutions to the Ricci flow on $M\times(a, b)$.  Let $t_0 \in (a, b)$. 
  \begin{enumerate}
    \item If $g(\cdot, t_0)= \gt(\cdot, t_0)$ on an connected open set $U\subset M$, then there exists a diffeomorphism
    $\phi: M\to M$ such that $g(t_0)= \phi^{*}\gt(t_0)$,
    \item Any local isometry, $\tilde{\phi}: (U, g|_U(t_0)) \to (V, g|_V(t_0))$,
	  between connected open sets $U$, $V\subset M$ can be uniquely extended to a global isometry 
	  $\phi:M \to M$.
 \end{enumerate}
\end{corollary}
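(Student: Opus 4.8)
The plan is to obtain the Corollary as a formal consequence of Theorem~\ref{thm:analyticity} together with the classical theory of analytic continuation of local isometries between real-analytic Riemannian manifolds (cf.\ Chapter~VI of \cite{KobayashiNomizu}).

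The first step is a reduction to a statement about real-analytic metrics at the single time $t_0$. Fixing $t_0\in(a,b)$, I would choose $\varepsilon>0$ with $t_0-\varepsilon>a$ and observe that $g$ and $\gt$ are smooth solutions of \eqref{eq:rf} on $M\times[t_0-\varepsilon,t_0]$; after translating the time variable so that this interval becomes $[0,\varepsilon]$, Theorem~\ref{thm:analyticity} (with $U=M$, evaluated at the final time) shows that both $(M,g(t_0))$ and $(M,\gt(t_0))$ are real-analytic Riemannian manifolds. By hypothesis they are complete, and $M$ is connected and simply-connected. With this in hand, both parts of the Corollary follow from one classical fact: if $N$ and $N'$ are real-analytic Riemannian manifolds of the same dimension with $N$ connected, simply-connected and complete, then any isometry of a connected open subset of $N$ into $N'$ extends uniquely to an isometric immersion $N\to N'$, which is moreover a Riemannian covering, and is a global isometry whenever $N'$ is simply-connected as well. (This is the content of Theorem~6.3 and Corollary~6.4 of \cite{KobayashiNomizu}, Ch.~VI, applied to the Levi-Civita connections; real-analyticity is exactly what lets the germ of the isometry be continued along every path.)

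For part~(1), since $g(\cdot,t_0)=\gt(\cdot,t_0)$ on $U$ the identity map is an isometry of the open subset $(U,g(t_0))$ of $(M,g(t_0))$ onto the open subset $(U,\gt(t_0))$ of $(M,\gt(t_0))$; applying the fact above with $N=(M,g(t_0))$ and $N'=(M,\gt(t_0))$ --- both complete, real-analytic, connected and simply-connected --- produces a diffeomorphism $\phi\colon M\to M$ with $\phi^{*}\gt(t_0)=g(t_0)$. For part~(2), I would apply the same fact with $N=N'=(M,g(t_0))$ directly to the given local isometry $\tilde\phi\colon(U,g|_U(t_0))\to(V,g|_V(t_0))$, obtaining a global isometry $\phi\colon M\to M$ extending it; uniqueness of $\phi$ comes from the identity principle --- two isometries of the connected real-analytic manifold $(M,g(t_0))$ that agree on the nonempty open set $U$ agree together with their differentials at a point, hence coincide everywhere.

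I do not expect a genuine obstacle here: once Theorem~\ref{thm:analyticity} is available the argument is formal. The only points demanding care are bookkeeping ones --- arranging the time-translation so that analyticity is extracted at $t_0$ itself, and invoking the extension theorem in the sharp form that yields a diffeomorphism rather than merely an isometric immersion, which is where completeness of the domain and simple-connectivity of the target are each used.
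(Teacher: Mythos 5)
Your proposal is correct and follows essentially the same route as the paper, which obtains the corollary directly from Theorem \ref{thm:analyticity} together with the classical extension theorem for local isometries of complete, simply-connected real-analytic Riemannian manifolds (Corollary 6.4, Ch.~VI of \cite{KobayashiNomizu}). Your additional bookkeeping (time-translating so $t_0$ becomes a positive time, noting no global curvature bound is needed since Theorem \ref{thm:analyticity} is local, and the uniqueness via the identity principle) fills in exactly what the paper leaves implicit.
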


Since any local Ricci soliton may be transformed into a local (self-similar) solution
to Ricci flow, Theorem \ref{thm:analyticity} also provides a new proof of the real-analyticity of Ricci solitons, 
a fact which can be proven, much as for Einstein metrics (cf. \cite{DeTurckKazdan}, \cite{Ivey}), by the use of harmonic coordinates.
\begin{corollary}
  Suppose $(M, g, X, \lambda)$ is a Ricci soliton, i.e.,
\[ 
\Rc(g) + \mathcal{L}_X g + \lambda g = 0,
\]
 for some smooth vector field $X$ and scalar $\lambda$. 
  Then $(M, g)$ is a real-analytic manifold.
\end{corollary}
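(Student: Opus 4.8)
The plan is to realize the soliton as the time-zero slice of a self-similar solution of \eqref{eq:rf} defined on a product neighborhood of each point, and then to apply Theorem~\ref{thm:analyticity}. All the analytic estimates are contained in that theorem, so the only genuine work is (i) to carry out the self-similar reduction in the purely local, possibly incomplete, setting, and (ii) to check that real-analyticity of the metric survives the self-similar gauge change, which is a priori only smooth.

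Fix $p\in M$ and rewrite the soliton equation as $-2\Rc(g)=\mathcal{L}_{X_0}g+2\lambda g$, with $X_0\defn 2X$. I would begin by recalling the classical construction of the associated self-similar solution: set $\sigma(t)\defn 1+2\lambda t$, let $\psi_t$ be the flow generated by the time-dependent vector field $Y_t\defn\sigma(t)^{-1}X_0$ normalized by $\psi_0=\operatorname{id}$, and put $g(t)\defn\sigma(t)\,\psi_t^{*}g$. A direct computation, using only the soliton equation and the scale- and diffeomorphism-invariance of $\Rc$ — all pointwise, local statements — shows that $g(t)$ solves \eqref{eq:rf} with $g(0)=g$ wherever it is defined; in particular no completeness of $g$ or of $X$ is used. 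Choosing first a precompact neighborhood $V$ of $p$ with $\overline V$ compact in $M$, and then a sufficiently small $\varepsilon>0$ and a sufficiently small geodesic ball $U$ about $p$, I can then ensure that $\psi_t$ is defined on $U$ and restricts there to a diffeomorphism onto an open subset of $V$ for every $t\in[0,\varepsilon]$, and moreover — by flowing $p$ backward from time $\varepsilon$ — that $p\in\psi_\varepsilon(U)$. Thus $g(t)$ is a smooth solution of \eqref{eq:rf} on $U\times[0,\varepsilon]$, and Theorem~\ref{thm:analyticity} applies: $g(\varepsilon)$ is real-analytic in its geodesic normal coordinates about each point of $U$.

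It remains to transfer this conclusion to $g$. By construction $\psi_\varepsilon^{*}\!\left(g|_{\psi_\varepsilon(U)}\right)=\sigma(\varepsilon)^{-1}g(\varepsilon)$, so $\psi_\varepsilon$ is an isometry from $\bigl(U,\sigma(\varepsilon)^{-1}g(\varepsilon)\bigr)$ onto $\bigl(\psi_\varepsilon(U),g\bigr)$. The key point — and the one place where some care is needed, since $\psi_\varepsilon$ is known only to be smooth — is that the property of being a real-analytic Riemannian manifold, in the sense used in Theorem~\ref{thm:analyticity} (namely, real-analytic components of the metric in geodesic normal coordinates), depends only on the isometry class. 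Indeed, a constant rescaling alters neither the geodesics nor, up to a constant factor, the metric components in a given normal chart, so $\sigma(\varepsilon)^{-1}g(\varepsilon)$ is again real-analytic on $U$; and an isometry carries geodesics to geodesics, hence maps each geodesic normal coordinate chart of its domain onto a geodesic normal coordinate chart of its target, precomposed with the linear isometry it induces on the tangent space, relative to which the metric has the same — real-analytic — components. (This is the same elementary fact that underlies the monodromy argument quoted above: once the metric is real-analytic in one geodesic normal chart, the geodesic system there has real-analytic coefficients, so the exponential map, and with it every neighboring geodesic normal chart, is real-analytic.) Hence $\bigl(\psi_\varepsilon(U),g\bigr)$ is a real-analytic Riemannian manifold; since $\psi_\varepsilon(U)$ is an open neighborhood of $p$ and $p\in M$ was arbitrary, $(M,g)$ is real-analytic.

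I expect the main obstacle to lie not in any estimate — Theorem~\ref{thm:analyticity} supplies that — but precisely in the two bookkeeping points above: confirming that the self-similar reduction remains valid with no completeness hypothesis on $(M,g)$ or $X$, and recognizing that real-analyticity of a Riemannian metric, being detected intrinsically through geodesic normal coordinates, is automatically preserved under the smooth gauge transformation $\psi_\varepsilon$.
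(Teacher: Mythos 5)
Your proposal is correct and follows exactly the route the paper intends (and only sketches in one sentence): convert the soliton locally into a self-similar solution $g(t)=\sigma(t)\psi_t^{*}g$ of the Ricci flow, apply Theorem~\ref{thm:analyticity} at a positive time, and transfer analyticity back to $g$ through the constant rescaling and the isometry $\psi_\varepsilon$, none of which requires completeness. The details you supply — the local-in-time existence of the flow of $Y_t=\sigma(t)^{-1}X_0$ on a precompact neighborhood and the invariance of real-analyticity in geodesic normal coordinates under isometries — are precisely the bookkeeping the paper leaves implicit.
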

Note that since $\Delta X = - \Rc(X)$ on a Ricci soliton, it follows that the representation of the vector field 
$X$ in geodesic normal coordinates will also be analytic.

Theorem \ref{thm:analyticity} will be a consequence of the following estimate.  Here and below, $R$ denotes the Riemann curvature tensor,
and $\Omega(p, r)$ and $\Omega(p, r, T)$ denote, respectively, $B_{g(0)}(p, r)$ and $\Omega(p, r)\times [0, T]$.
\begin{theorem}\label{thm:localest}
Suppose $g(x, t)$ is a smooth solution to the Ricci flow on
the open set $U\subset M^n$ for $t\in [0, T]$.  Let $p \in U$ and $\rho > 0$ such that $\overline{\Omega(p, 3\rho)}$ is compactly contained
in $U$ and define $M_0 \defn \sup_{\Omega(p, 3\rho, T)}|R|_{g(t)}$.
Then, there exist positive constants $C$, $N$, and $\tau$ depending only on $n$, $\rho$, $T$ and $M_0$ such that for all $m\in \mathbb{N}\cup\{0\}$,
\begin{equation}\label{eq:localest}
 t^{m/2}|\nabla^{m}R|_{g(t)}(x,t) \leq CN^{m/2}(m+1)!
\end{equation}
on $\Omega(p, \rho, \tau)$.
\end{theorem}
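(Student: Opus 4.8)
The plan is to deduce \eqref{eq:localest} from the scalar maximum principle applied to a single function that encodes all of the derivative bounds at once, in the spirit of Bando's proof but localized by a spatial cutoff. As a preliminary I would invoke the classical local derivative estimates of Shi, which on $\Omega(p,2\rho,\tau)$ already give qualitative bounds $|\nabla^m R|\le C_m\,t^{-m/2}$ for every $m$ (and, incidentally, control the drift of the geometry of $g(t)$ relative to $g(0)$); the content of the theorem is then to upgrade the unstructured constants $C_m$ to $C\,N^{m/2}(m+1)!$. Fix a smooth cutoff $\varphi$ on $U$ with $\varphi\equiv 1$ on $\Omega(p,\rho)$, $\supp\varphi\subset\Omega(p,2\rho)$, and $|\nabla^{g(t)}\varphi|_{g(t)}\le C_1\rho^{-1}$, $|\Delta_{g(t)}\varphi|\le C_1\rho^{-2}$ for $t\le\tau$ — all controlled by $n,\rho,M_0,T$ since $|R|\le M_0$ on $\Omega(p,3\rho,T)$. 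For parameters $\Lambda>0$ and $N\in\mathbb{N}$ to be fixed below, set
\[
  G_N \defn \sum_{m=0}^{N}\frac{\Lambda^{m}}{\bigl((m+1)!\bigr)^{2}}\,\varphi^{2m}\,t^{m}\,|\nabla^{m}R|_{g(t)}^{2}.
\]
Working with the \emph{finite} partial sum rather than the full series is what keeps the localization clean: $G_N$ is manifestly smooth on $\overline{\Omega(p,2\rho)}\times[0,T]$, there is no convergence to check, and it will suffice to bound $G_N$ on $\Omega(p,2\rho)\times[0,\tau]$ by a constant independent of $N$. On the parabolic boundary of this set $G_N = |R|^2 \le M_0^2$, since the terms with $m\ge 1$ vanish at $t=0$ (the factor $t^m$) and on $\partial\Omega(p,2\rho)$ (the factor $\varphi^{2m}$).

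Next I would compute $\heatop G_N$, the input being the standard evolution equation $\heatop\nabla^m R = \sum_{i+j=m}\nabla^i R * \nabla^j R$ (from the Bianchi identities, the commutator $[\partial_t,\nabla]$, and $\partial_t g = -2\Rc$). For $u_m \defn t^m|\nabla^m R|^2$ this gives
\[
  \heatop u_m \;\le\; \frac{m}{t}\,u_m \;-\; 2t^m|\nabla^{m+1}R|^2 \;+\; c_n\!\!\sum_{i+j=m}\!\sqrt{u_i\,u_j\,u_m},
\]
while $\heatop(\varphi^{2m}u_m)$ carries in addition the cutoff contributions $-u_m\Delta(\varphi^{2m})$ and $-2\nabla(\varphi^{2m})\cdot\nabla u_m$. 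The exponent $2m$ on $\varphi$ is dictated by the reaction term — any larger power would resurface there with a negative exponent. Setting $b_m \defn \Lambda^m\bigl((m+1)!\bigr)^{-2}\varphi^{2m}u_m$, so $G_N = \sum_{m=0}^{N} b_m$, the contributions to $\heatop G_N$ sort into three families: (i) the good terms $-2\,\Lambda^m\bigl((m+1)!\bigr)^{-2}\varphi^{2m}t^m|\nabla^{m+1}R|^2$; (ii) transport- and cutoff-type terms, of the form $\Lambda^m\bigl((m+1)!\bigr)^{-2}\varphi^{2m-2}t^{m-1}|\nabla^m R|^2$ with coefficient $\le c(n,\rho,M_0,T)\,m^2$, the genuine cross term $\nabla\varphi\cdot\nabla u_m$ being broken up by Young's inequality (with a parameter of size $\sim 1/m$) into a small multiple of the good term at level $m$ and one of this form; and (iii) the reaction $c_n\sum_{i+j=m}\frac{(i+1)!(j+1)!}{(m+1)!}\sqrt{b_i\,b_j\,b_m}$.

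The crux is to absorb (ii) and (iii). For (iii) the factorial normalization is exactly what makes the sums close: bounding $\sqrt{b_0} = |R| \le M_0$ and $\sqrt{b_1}\le\sqrt{G_N}$ in the (otherwise non-summable) terms with $\min(i,j)\le 1$, and using $\frac{(i+1)!(j+1)!}{(i+j+1)!}\lesssim j^{-2}$ together with $\sqrt{b_m}\le\sqrt{G_N}$ when $i,j\ge 2$, one finds that (iii) summed over $m$ is at most $c_n M_0\,G_N + c_n\,G_N^{3/2}$ (plus the harmless $R*R*R$ term at $m=0$, which is $\le c_n M_0^3$). For (ii) the point is that the term at level $m$ is dominated by a small fraction of the good term at level $m-1$: since $\frac{\Lambda^m/((m+1)!)^2}{\Lambda^{m-1}/(m!)^2} = \frac{\Lambda}{(m+1)^2}$, the factor $m^2$ is exactly cancelled and one is left with a multiplier $\le c\Lambda$, which is $<\tfrac12$ once $\Lambda$ is chosen small in terms of $n,\rho,M_0,T$. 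Assembling everything, $G_N$ satisfies $\heatop G_N \le a\,G_N^{3/2} + b\,G_N + c$ on $\Omega(p,2\rho)\times[0,\tau]$ with $a,b,c$ depending only on $n,\rho,M_0,T$ and \emph{not} on $N$. Comparing with the ODE $\Theta' = a\Theta^{3/2} + b\Theta + c$, $\Theta(0) = M_0^2$, which stays finite up to some time $t^*(a,b,c,M_0)$, the maximum principle gives $G_N \le \overline{C}(n,\rho,M_0,T)$ on $\Omega(p,2\rho)\times[0,\tau]$ provided $\tau$ is also taken $\le\min\{T,\rho^2,t^*/2\}$. Finally, letting $N\to\infty$ and restricting to $\Omega(p,\rho)$, where $\varphi\equiv 1$, yields $t^m|\nabla^m R|^2 \le \overline{C}\,\Lambda^{-m}\bigl((m+1)!\bigr)^2$, which is \eqref{eq:localest} with $C = \sqrt{\overline{C}}$ and $N = \Lambda^{-1}$.

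The step I expect to be the main obstacle is precisely the bookkeeping in (ii). The reaction term pins the cutoff exponent at $2m$, so differentiating $\varphi^{2m}$ unavoidably produces powers $\varphi^{2m-1}, \varphi^{2m-2}$ that cannot be reabsorbed at their own level; they must instead be charged against the good term one level below, and this works only because of the exact weights $\bigl((m+1)!\bigr)^{-2}$ (which kill the combinatorial factors $m, m^2$) and the freedom to shrink $\Lambda$ and $\tau$. Everything else — the identity for $\heatop\nabla^m R$, the equivalence of $g(t)$ with $g(0)$, the combinatorial estimate in (iii), and the scalar comparison argument — is routine.
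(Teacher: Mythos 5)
Your proposal is correct in substance and follows the same basic strategy as the paper: a spatially localized version of Bando's factorially weighted sum, an evolution inequality for it obtained by absorbing the $m$-dependent commutator, transport and cutoff terms into the good gradient terms using the weights $((m+1)!)^{-2}$ and a small parameter ($\Lambda$ for you, $1/N$ in the paper), and a maximum-principle comparison with an ODE on $\Omega(p,2\rho)\times[0,\tau]$, followed by restriction to $\Omega(p,\rho)$. The differences are in the bookkeeping, and they are genuine but modest. First, you attach the cutoff as $\varphi^{2m}$ and charge the terms coming from $\nabla\varphi$, $\Delta\varphi$ and the transport term $mt^{-1}u_m$ against the Laplacian (``good'') term one level down; the paper instead uses $\eta^{k+1}$, absorbs the cross term into the same-level gradient term, and therefore needs the Calabi-type bound $|\nabla\eta|^2-\eta\Delta\eta\leq C_0\eta$ (so that $|\nabla\eta|/\eta^{1/2}$ is controlled), which your scheme avoids; on the other hand both constructions equally require Shi's first-order estimate (or Hamilton's) to control $\Delta_{g(t)}\varphi$ through the drift of the Christoffel symbols, so your parenthetical ``since $|R|\leq M_0$'' undersells what the cutoff lemma actually uses. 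Second, you close the cubic reaction term as $c_nG_N^{3/2}$ and compare with $\Theta'=a\Theta^{3/2}+b\Theta+c$, taking $\tau$ below the ODE's lifespan; the paper instead keeps the reaction in mixed form, proves $\Theta_m\leq\Phi_m^{1/2}\Psi_m$, and runs a bootstrap on the time interval where $t\Phi_m^{1/2}\leq1$ so that the reaction is absorbed into $-\tfrac12\Psi_m$, leaving only the linear ODE $F'=C_4(F+1)$. Both devices yield a $\tau$ depending only on $n,\rho,T,M_0$, so this is a legitimate alternative.

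One step of your sketch needs repair as written: in the reaction estimate for $i,j\geq2$ you propose to use $\sqrt{b_m}\leq\sqrt{G_N}$ pointwise and then sum the remaining factor $\sum_{i+j=m}j^{-2}\sqrt{b_ib_j}$ over $m$; done literally this leaves a sum of the form $\sum_m\sqrt{b_m}$, which is only $O(\sqrt{N}\,G_N^{1/2})$ and would make your constants $N$-dependent. The correct closure keeps $\sqrt{b_m}$ inside and applies Cauchy--Schwarz in $m$ (using $\sum_m b_m=G_N$ together with $\sum_j j^{-2}<\infty$), exactly as in the paper's computation $\Theta_m^2\leq\Phi_m\Psi_m^2$; with that substitution your bound $c_nM_0G_N+c_nG_N^{3/2}+c_nM_0^3$ is valid and the rest of the argument goes through.
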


The estimates \eqref{eq:localest} are variants of the well-known local
estimates of Shi \cite{Shi} and Hamilton \cite{HamiltonSingularities}  (see also \cite{LuTian}, \cite{ShermanWeinkove}), which likewise take the form
\[
t^m|\nabla^{m}R|^2\leq C(n, m, \rho, M_0, T).
\] 
The only new content is an explicit accounting of the dependency of the constants $C(n, m, \rho, M_0, T)$ on 
the order $m$ -- a dependency that is often unimportant in applications and consequently rather obscure in the variants of the 
estimates of which we are aware.  In fact, it is an interesting question whether, for example, the constants
generated inductively in Shi's argument are of sufficiently slow growth in $m$ to ensure the real-analyticity.
The application of the 
heat operator to the quantity $|\nabla^{m}R|^2$, the subsequent commutation of the Laplacian with the $m$-fold covariant derivative,
and the $m$-fold differentiation of the reaction terms on the right-hand side of $(\pdt - \Delta) R = R\ast R$
together generate a number of lower order terms which grow with $m$.
To control these terms, we found it easiest to use a localized modification of Bando's original quantity
\[
    \varphi = \sum_{k=0}^m \frac{t^k}{((k+1)!)^2}|\nabla^k R|^2,
\]
whose evolution equation can be arranged, as in the global case,
produce a comparison with the solution of an appropriate ODE.

\section{Proof of the local estimates}

For the remainder of this paper, we will work in the setting of the statement of Theorem \ref{thm:localest}.
We first describe our cut-off function.  

\begin{lemma}\label{lem:cutoff}
  Under the assumptions of Theorem \ref{thm:localest}, there exists a cut-off function $\eta: U \to [0, 1]$ that is compactly
  supported in $\Omega(p, 2\rho)$, satisfies $\eta \equiv 1$ on $\Omega(p, \rho)$, and whose derivatives satisfy 
  \begin{align}\label{eq:cutoffderivative}
      |\nabla\eta|^2_{g(t)} -\eta\Delta_{g(t)}\eta \leq C_0\eta 
  \end{align}
  on $U\times [0, T]$ for some constant $C_0 = C_0(n, \rho, T, M_0)$.
\end{lemma}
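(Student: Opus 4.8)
The plan is to construct $\eta$ as a function of the $g(0)$-distance to $p$, regularized so that it is smooth, and then to control its time-dependent derivatives using the curvature bound $M_0$ on $\Omega(p,3\rho,T)$. Concretely, let $r(x) = d_{g(0)}(p,x)$ and pick a smooth nonincreasing $\psi:[0,\infty)\to[0,1]$ with $\psi\equiv 1$ on $[0,\rho]$, $\psi\equiv 0$ on $[2\rho,\infty)$, and with $|\psi'|^2/\psi$ and $|\psi''|$ bounded (the standard trick: take $\psi = \chi^2$ for a smooth $\chi$ with $|\chi'|\le C/\rho$, $|\chi''|\le C/\rho^2$, so that $|\psi'|^2/\psi = 4|\chi'|^2$ is bounded even where $\psi$ vanishes). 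Set $\eta = \psi\circ r$. Since $r$ is only Lipschitz and fails to be smooth on the cut locus of $p$, I would either mollify $r$ or, more cleanly, invoke the Calabi trick to make the barrier argument rigorous — this lets us assume $r$ is smooth near any point where we test the inequality \eqref{eq:cutoffderivative}, at the cost of an arbitrarily small error that can be absorbed.

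The next step is the geometric estimate on the derivatives of $r$. We have $|\nabla_{g(0)} r|=1$, so $|\nabla_{g(0)}\eta|^2 = |\psi'(r)|^2$. For the Laplacian term, the Laplacian comparison theorem applied on $\Omega(p,2\rho)$ — where the $g(0)$-curvature is bounded by $M_0$ — gives $\Delta_{g(0)} r \le (n-1)\sqrt{M_0}\coth(\sqrt{M_0}\,r) \le C(n,M_0,\rho)/r$ on the annulus $\rho \le r \le 2\rho$ away from the cut locus, hence $\Delta_{g(0)}\eta = \psi''(r) + \psi'(r)\Delta_{g(0)} r$ is bounded in absolute value by $C(n,\rho,M_0)$ where $\psi'\neq 0$. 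Combining these, $|\nabla_{g(0)}\eta|^2 - \eta\Delta_{g(0)}\eta \le C|\psi'|^2 + C\eta \le C(n,\rho,M_0)\,\eta$, using $|\psi'|^2 \le C\psi = C\eta$.

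The remaining point is to pass from the $g(0)$-metric to $g(t)$ for $t\in[0,T]$. Since $\partial_t g = -2\Rc(g)$ and $|\Rc|\le (n-1)|R| \le (n-1)M_0$ on $\Omega(p,3\rho,T)$, the metrics $g(t)$ and $g(0)$ are uniformly equivalent on $\Omega(p,2\rho)$ for $t\in[0,T]$, with constants $e^{\pm 2(n-1)M_0 T}$ depending only on $n$, $M_0$, $T$; likewise the Christoffel symbols of $g(t)$ differ from those of $g(0)$ by $\int_0^t \nabla\Rc$-type terms, which are controlled once we have interior bounds on $\nabla R$ — and here one can use the standard Shi-type estimate $|\nabla R| \le C(n,\rho,M_0,T)$ on the slightly smaller ball $\Omega(p,2\rho,T)$, which is available without any growth-in-$m$ bookkeeping. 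With metric equivalence and a bound on $|\Gamma(g(t)) - \Gamma(g(0))|$, both $|\nabla_{g(t)}\eta|^2$ and $\eta\Delta_{g(t)}\eta$ are comparable to their $g(0)$-counterparts up to the constant $C_0$, giving \eqref{eq:cutoffderivative} on $U\times[0,T]$.

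The main obstacle is the lack of smoothness of the distance function on the cut locus of $p$; handling this cleanly — via the Calabi trick or a mollification argument — while keeping the resulting constant dependent only on $n,\rho,T,M_0$ is the technical heart of the lemma. A secondary subtlety is ensuring the time-independence of the cut-off (we want one fixed $\eta$, not $\eta(t)$), which is why it is built from $g(0)$-distance and the $t$-dependence is pushed entirely into comparing the two metrics via the bounded-curvature hypothesis on the larger ball $\Omega(p,3\rho,T)$.
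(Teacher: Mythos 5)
Your overall strategy --- build $\eta=\psi\circ d_{g(0)}(p,\cdot)$ with $|\psi'|^2/\psi$ and $|\psi''|$ bounded, handle the cut locus by Calabi's trick, and convert $g(0)$-derivative control into $g(t)$-derivative control via the curvature bound --- is exactly the route the paper takes; its proof simply cites Lemma 14.4 of \cite{RFV2P2} and names the same two ingredients (uniform equivalence of the metrics $g(t)$ and a local first-derivative estimate for the curvature). However, two of your steps are stated incorrectly as written. First, the Shi/Hamilton local estimate does not give $|\nabla \Rm|\le C(n,\rho,M_0,T)$ on $\Omega(p,2\rho,T)$: no bound on $\nabla\Rm$ at $t=0$ follows from a bound on $|\Rm|$ alone, and the correct local statement (the one quoted in the paper) is $t|\nabla\Rm|^2\le C(n,\rho,T,M_0)$, i.e. $|\nabla\Rm|\le C t^{-1/2}$. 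Your Christoffel comparison survives because $\bigl|\Gamma(g(t))-\Gamma(g(0))\bigr|\le C\int_0^t|\nabla\Rm|\,ds\le C'\sqrt{t}$, but the estimate must be invoked in this integrable form, not as a uniform bound down to $t=0$.

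Second, the Laplacian comparison theorem alone does not control $\Delta_{g(t)}\eta$. Writing $\Delta_{g(t)}\eta = g^{ij}(t)\,\nabla^{g(0)}_i\nabla^{g(0)}_j\eta + \bigl(\Gamma(g(0))-\Gamma(g(t))\bigr)\ast d\eta$, the trace is taken with respect to $g(t)$, so metric equivalence transfers the estimate only if you control the full quadratic form $\nabla^{g(0)}\nabla^{g(0)} r$, not merely its $g(0)$-trace $\Delta_{g(0)}r$, which is all the (Ricci-based) Laplacian comparison provides. The repair is immediate because the hypothesis is a full curvature bound: $|\Rm|\le M_0$ bounds the sectional curvatures from below, so the Hessian comparison theorem gives $\nabla^{g(0)}\nabla^{g(0)} r\le C(n,M_0,\rho)\,g(0)$ on the annulus away from the cut locus (and in the barrier sense via \cite{Calabi}); since $\psi'\le 0$, this one-sided bound yields a quadratic-form lower bound on $\nabla^{g(0)}\nabla^{g(0)}\eta$, hence a lower bound on $\Delta_{g(t)}\eta$ and an upper bound on $-\eta\Delta_{g(t)}\eta$, which is all that \eqref{eq:cutoffderivative} requires. (Your claimed two-sided bound $|\Delta_{g(0)}\eta|\le C$ is not in general available near cut points that are conjugate points, but it is also not needed.) With these two corrections your argument is complete and coincides with the paper's intended proof.
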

\begin{proof}
 See, e.g, Lemma 14.4 of \cite{RFV2P2}.  The key is that the local curvature bound implies the uniform equivalence of the metrics $g(t)$
 on $\overline{\Omega}(p, 3\rho, T)$ and the local first derivative estimate of either Shi \cite{Shi} or Hamilton \cite{HamiltonSingularities}
 supplies a bound of the form
 \[
      \sup_{\overline{\Omega}(p, 2\rho, T)} t|\nabla \Rm|_{g(t)}^2(x, t) \leq C(n, \rho, T, M_0),
 \]
 and these, together, are sufficient to control the $g(t)$-gradient and Laplacian of the cut-off function. 
\end{proof}

Although $\eta$ is constructed by composition with a Riemannian distance function -- namely, that of the initial metric $g(0)$ -- 
we may, as usual, on account of Calabi's trick \cite{Calabi}, regard the resulting function as smooth for the purpose of applying the maximum principle.  
(Alternatively, at the outset
of what follows, we may simply decrease $\rho$ if necessary to ensure that $3\rho < \operatorname{inj}_p(g(0))$.)

Now we introduce the principal quantity in our estimate, a simple modification
of that introduced by Bando \cite{Bando}. We define 
\[
  A_k \defn \left(\frac{t}{N}\right)^{k/2}\frac{|\nabla^{k}\Rm|}{(k+1)!}, \quad \phi_k \defn \eta^{k+1}A^2_k,
\]
for $k = 0, 1, 2, \ldots$ and
\[
   B_k \defn \left(\frac{t}{N}\right)^{(k-1)/2}\frac{|\nabla^k\Rm|}{k!},\quad
 \psi_k \defn \eta^{k}B^2_k, 
\]
for $k = 1, 2, 3, \ldots$. We then compute
\begin{equation}\label{eq:phikevol}
 \heatop \phi_k = \eta^{k+1}\heatop A^2_k + A^2_k\heatop \eta^{k+1} - 2\langle\nabla\eta^{k+1}, \nabla A^2_k\rangle.
\end{equation}

We will split our computations into two cases, depending as $k \geq 1$ or $k = 0$. We consider first the case 
$k\geq 1$. As in \cite{Bando}, the quantity $A^2_k$ can be seen to satisfy
\begin{equation}\label{eq:firstterm}
  \heatop A^2_k \leq -2B^2_{k+1} +\frac{k}{N(k+1)^2}B^2_k + C_1 M_0 A^2_k
  + \frac{C_1t}{N} S_k
\end{equation}
where $C_1 = C_1(n)$ is independent of $k$ and
\[
  S_k \defn \sum_{i=0}^{k-1}\frac{A_i B_{k-i} B_{k}}{i+2}.
\]

Using \eqref{eq:firstterm} and $0 \leq \eta \leq 1$, the first term of \eqref{eq:phikevol} thus satisfies
\begin{align*}
\eta^{k+1}\heatop A^2_k &\leq -2\psi_{k+1} + \frac{1}{N}\psi_k + C_1M_0\phi_k + \frac{C_1t}{N}\theta_k
\end{align*}
for  $t \leq T$, where
\[
  \theta_k \defn \eta^{k+1/2}S_k = \sum_{i=0}^{k-1}\frac{\phi^{1/2}_i\psi_{k-i}^{1/2}\psi_{k}^{1/2}}{i+2}.
\]

Then, since
\begin{align*}
 -\Delta \eta^{k+1} &= - k(k+1)\eta^{k-1}|\nabla \eta|^2 - (k+1)\eta^k\Delta \eta\\
		    &\leq (k+1)C_0\eta^k,
\end{align*}
and $(k+1)A_k = \sqrt{t/N}B_k$,
the second term of \eqref{eq:phikevol} satisfies
\begin{align}\label{eq:secondterm}
 A_k^2\heatop \eta^{k+1} &\leq (k+1)C_0\eta^kA_k^2 \leq \frac{C_0T}{N(k+1)}\psi_k.
\end{align}

Finally, on $\supp \eta$, the last term of \eqref{eq:phikevol} may be estimated by
\begin{align}\label{eq:lastterm}\begin{split}
  &-2\langle \nabla\eta^{k+1}, \nabla A^2_k\rangle
	\leq 4\frac{(k+1)}{((k+1)!)^2}\left(\frac{t}{N}\right)^k\eta^k|\nabla\eta||\nabla^{k+1}\Rm||\nabla^k\Rm|\\
  &\qquad\quad\leq 4 \frac{|\nabla\eta|}{\eta^{1/2}}\sqrt{\frac{t}{N}}\left(t^{k/2}\eta^{(k+1)/2}\frac{|\nabla^{k+1}R|}{(k+1)!}\right)
	      \left(t^{(k-1)/2}\eta^{k/2}\frac{|\nabla^{k}R|}{k!}\right)\\
  &\qquad\quad\leq \psi_{k+1} + \frac{4C_0t}{N}\psi_k.
\end{split}
\end{align}

Taken together, \eqref{eq:firstterm}, \eqref{eq:secondterm}, and \eqref{eq:lastterm} imply that, for $k \geq 1$,
\begin{equation}\label{eq:phik}
 \heatop \phi_k \leq -\psi_{k+1} + \frac{C_2}{N}\psi_k + C_1M_0\phi_k + \frac{C_1t}{N} \theta_k
\end{equation}
where $C_2 = C_2(n, M_0, \rho, T)$. In the case $k = 0$, we may estimate $\phi_0 = \eta|\Rm|^2$ in the same way, obtaining
\begin{equation}\label{eq:phi0}
  \heatop \phi_0 \leq -\psi_1 + C_1 M_0 \phi_0 + C_3
\end{equation}
for some constant $C_3= C_3(n, M_0, \rho, T)$.

Now we define
\[
  \Phi_m \defn \sum_{k=0}^m\phi_m, \quad \Psi_m\defn \sum_{k=1}^{m}\psi_k, \quad \Theta_m \defn \sum_{k=1}^{m}\theta_k. 
\]
Provided we choose $N = N(n, M_0, \rho, T)$ suitably large ($N >  2 \times \operatorname{max}{(C_1, C_2)}$ is sufficient),
equations \eqref{eq:phik} and \eqref{eq:phi0} combine to produce the estimate
\begin{equation}\label{eq:Phimevol1}
  \heatop \Phi_m \leq -\frac{1}{2}(\Psi_m -t\Theta_m) + C_4(\Phi_m +  1)
\end{equation}
for $C_4 = C_1M_0 + C_3$
Before we apply the maximum principle, it remains only to estimate $\Theta_m$, and this may be done just as for the corresponding quantity in \cite{Bando}
(see also Chapter 13.2 in \cite{RFV2P2}); we reproduce the estimate here for completeness:
\begin{align*}
  \Theta^2_m &=\left(\sum_{k=1}^m\sum_{i=0}^{k-1}\frac{1}{i+2}\phi^{1/2}_i\psi^{1/2}_{k-i}\psi^{1/2}_{k}\right)^2\\
	     &\leq\sum_{k=1}^m\left(\sum_{i=0}^{k-1}\frac{1}{i+2}\phi^{1/2}_i\psi^{1/2}_{k-i}\right)^2\sum_{k=1}^m\psi_k\\
	     &\leq\sum_{k=1}^m\left\{\left(\sum_{i=0}^{k-1}\frac{1}{(i+2)^2}\right)\left(\sum_{i=0}^{k-1}\phi_i\psi_{k-i}\right)\right\}\Psi_m\\
	     &\leq\Phi_m \Psi_m^2,
\end{align*}
where we have used that $\sum_{i=0}^{\infty}1/(i+2)^2 < 1$.
So $\Theta_m \leq \Phi_m^{1/2}\Psi_m$, and returning to \eqref{eq:Phimevol1},
we have
\begin{equation}\label{eq:Phimevol2}
  \heatop \Phi_m \leq -\frac{1}{2}\Psi_m(1-t\Phi^{1/2}_m) + C_4(\Phi_m +  1).
\end{equation}

For the time being, let $\tau_m$ denote
\[
    \tau_m = \sup\{a\in [0, T] \mid t\Phi^2_m(x, t) \leq 1\quad\mbox{for all}\quad (x, t)\in U\times[0, a]\}. 
\]
We will soon show that there exists a constant $\tau = \tau(n, M_0, \rho, T) > 0$ for which $\tau_m \geq \tau$
for all $m$, but for now, simply note that, owing to the compact support of each $\Phi_m(\cdot, t)$ in $\Omega(p, 2\rho)$, we at least have
$\tau_m > 0$ for all $m$. 

Let
\[
  F(t) = (M_0^2 + 1)\exp(C_4t) - 1,
\]
so that $F$ solves $F^{\prime} = C_4(F+1)$ with $F(0) = M_0^2$.  The function $\Upsilon_m \defn \Phi_m - F$
then satisfies $\Upsilon_m \leq 0$ on the parabolic boundary of $\Omega(p, 2\rho, T)$
and
\[
  \heatop \Upsilon_m \leq 0
\]
on $\Omega(p, 2\rho, \tau_m)$.
Thus, on $\Omega(p, 2\rho, \tau_m)$ we have, by the maximum principle,
\begin{equation}\label{eq:Phimest}
    \Phi_m(x, t) \leq F(t) \leq (M_0^2 + 1)\exp(C_4 T) \defn C(n, \rho, T, M_0)
\end{equation}

But it is clear now that if $\tau$ is the lesser of $T$ and $C^{-1/2}$, then $t^2\Phi_m(x, t) \leq 1$ for $t \leq \tau$. So we have $\tau_m \geq \tau$
for any $m$.  From \eqref{eq:Phimest}, it follows in particular that, for all $m \geq 0$ and $(x, t) \in \Omega(p, \rho, \tau)$, we have
\begin{align*}
  t^m|\nabla^m \Rm|^2(x, t) = t^m\eta^{m+1}(x)|\nabla^m \Rm|^2(x,t) \leq CN^m((m+1)!)^2,
\end{align*}
which is the estimate \eqref{eq:localest}.  Hence $g(x,t)$ is real-analytic (in geodesic coordinates) at $(p, t)$ for any $0 < t \leq \tau$. 
Iterating this argument proves the same for any $t\in (0, T]$, and it follows that $(U, g(t))$ is real-analytic for any $0 < t \leq T$.

\end{document}